\documentclass[11pt]{amsart}

\usepackage[a4paper,margin=27mm]{geometry}
\usepackage{amsmath,amssymb,mathtools}
\usepackage{graphicx}
\usepackage{microtype}
\usepackage{tikz}
\usetikzlibrary{arrows.meta,calc,positioning}
\usepackage[section]{placeins}

\definecolor{OmegaBlue}{RGB}{42,101,155}
\definecolor{OmegaFill}{RGB}{214,231,244}
\definecolor{CylinderGray}{RGB}{125,132,139}

\tikzset{
  axis/.style={-{Latex[length=2.2mm]},line width=0.55pt},
  boundary/.style={draw=OmegaBlue!88!black,line width=1.15pt},
  section/.style={draw=OmegaBlue!88!black,line width=0.85pt},
  hidden section/.style={draw=OmegaBlue!78!black,dashed,line width=0.75pt},
  artificial/.style={draw=CylinderGray,dashed,line width=0.65pt},
  note arrow/.style={-{Latex[length=1.8mm]},line width=0.55pt},
  panel title/.style={font=\small\bfseries},
  every node/.style={font=\small}
}

\usepackage[hidelinks]{hyperref}
\usepackage[nameinlink,noabbrev]{cleveref}

\hypersetup{
 pdftitle={Nonconvex Sublevel Sets for the Three-Dimensional Special Lagrangian Equation},
 pdfauthor={Guohuan Qiu}
}

\allowdisplaybreaks
\numberwithin{equation}{section}

\newtheorem{theorem}{Theorem}[section]
\newtheorem{proposition}[theorem]{Proposition}
\newtheorem{lemma}[theorem]{Lemma}
\theoremstyle{remark}
\newtheorem{remark}[theorem]{Remark}

\newcommand{\R}{\mathbb R}
\newcommand{\F}{\mathcal F}
\newcommand{\e}{\varepsilon}
\newcommand{\tr}{\operatorname{tr}}
\newcommand{\ii}{\mathrm i}

\title[Nonconvex sublevel sets for the special Lagrangian equation]
{Nonconvex Sublevel Sets for the Three-Dimensional
Special Lagrangian Equation}

\author{Guohuan Qiu}

\address{Institute of Mathematics, Academy of Mathematics and Systems Science,
Chinese Academy of Sciences, No. 55 Zhongguancun East Road, Beijing 100190,
China}
\email{qiugh@amss.ac.cn}

\subjclass[2020]{35J60, 35B06, 35B50, 53D12, 52A20}
\keywords{special Lagrangian equation, supercritical phase, convex domain,
nonconvex sublevel set}

\begin{document}

\begin{abstract}
For each phase $\Theta\in(\pi/2,\pi)$, we construct a smooth, bounded,
uniformly convex domain in $\mathbb R^3$ for which the zero-Dirichlet
solution of the special Lagrangian equation has a nonconvex negative
sublevel set. 
\end{abstract}

\maketitle

\section{Introduction}

For $A\in\operatorname{Sym}(n)$, define
\[
 \F_n(A)=\sum_{j=1}^n\arctan\lambda_j(A),
\]
where each inverse tangent takes values in $(-\pi/2,\pi/2)$. The equation
\begin{equation}\label{eq:SLE}
 \F_n(D^2u)=\Theta
\end{equation}
is the special Lagrangian equation, which serves as the potential equation for a special Lagrangian gradient graph \cite{HarveyLawson82}. The phase is called critical when
\(
|\Theta|=\frac{(n-2)\pi}{2},
\)
and supercritical when
\(
|\Theta|>\frac{(n-2)\pi}{2}.
\)

Let $\Omega\subset\R^n$ be smooth, bounded, and strictly convex. 
The Dirichlet problem for the special Lagrangian equation was first
treated by Caffarelli, Nirenberg and Spruck for certain elliptic phases
under suitable convexity assumptions on the domain
\cite{CNS}. Harvey and Lawson later established
existence and uniqueness of continuous viscosity solutions for all
branches on appropriately pseudoconvex domains
\cite{HarveyLawsonCPAM}, while subsequent works developed the
classical solvability theory in the critical and supercritical phase
regimes \cite{CollinsPicardWu,luDCDS,BhattacharyaMooneyShankarAJM}.
In particular, if
\[
 \frac{(n-2)\pi}{2}\le |\Theta|<\frac{n\pi}{2},
\]
then
\begin{equation}\label{eq:Dirichlet-intro}
 \begin{cases}
  \F_n(D^2u)=\Theta&\text{in }\Omega,\\
  u=0&\text{on }\partial\Omega
 \end{cases}
\end{equation}
has a unique solution which is smooth in $\Omega$ and Lipschitz continuous
on $\overline\Omega$.

For positive phase, Yuan \cite{Yuan06} proved that the level hypersurface
\[
 \left\{\lambda\in\mathbb{R}^n:
 \sum_{j=1}^n\arctan\lambda_j=\Theta\right\}
\]
in eigenvalue space is convex if and only if
\(
 \Theta\geq \frac{(n-2)\pi}{2}.
\)
This convexity plays an important role in the regularity theory for fully
nonlinear elliptic equations.  It provides the structural convexity
underlying the Evans--Krylov--Safonov theory and its extensions to equations
whose level hypersurfaces are convex; see \cite{caffarelli2000priori}.  The
critical threshold is also sharp from the viewpoint of regularity: smoothness
is known in the critical and supercritical regimes
\cite{warren2009hessian,WY10,WY11}, whereas singular solutions exist in the
subcritical regime
\cite{nadirashvili2010singular,wang2013singular,mooney2024non}.

For positive phase, the maximum principle implies that
\(u<0\) in \(\Omega\).  Since singular solutions are known to exist in
the subcritical regime, we focus on the supercritical phase, where the
Dirichlet problem admits smooth solutions under suitable assumptions.
This leads to the following natural question: does a smooth solution
inherit the convexity of the zero boundary level set?  

The borderline critical case is of particular interest. In dimension
three, when \(\Theta=\pi/2\), the special Lagrangian equation reduces, on
its elliptic branch, to
\[
 \sigma_2(D^2u)=1.
\]
The convexity of the level sets of the zero-Dirichlet solution was proved
by Ma and Xu \cite{MaXu} and later by Salani \cite{Salani}. This result was recently extended to higher
dimensional \(2\)-Hessian equations \cite{li2026brunn}.  In dimension two,
Zhang and Zhou \cite{zhang2023power} established a power-convexity result for
solutions of the special Lagrangian equation with zero Dirichlet data.  On
the other hand, in any dimension, the larger-phase regime
\(
 \Theta\geq\frac{(n-1)\pi}{2}
\)
is much more rigid: on the positive-phase branch it directly implies
\(
 D^2u>0.
\)
These observations indicate that, in dimension three, the interval
\[
 \frac{\pi}{2}<\Theta<\pi
\]
is the only strictly supercritical range in which nonconvex level sets
may occur.  Our result fills
this remaining gap.

The convexity of spatial level sets has been studied extensively for
various fully nonlinear elliptic equations; see
\cite{Kawohl1985,Korevaar1990}.  However, convexity of the domain alone
is not sufficient in general
\cite{HamelNadirashviliSire,MonneauShahgholian}.  In a related geometric
setting, Wang \cite{Wang14} constructed a smooth uniformly convex domain
for which the zero-Dirichlet solution of the constant mean curvature
equation has a nonconvex level set. Here, special Lagrangian equation is actually the minimal surface equation in higher codimension.

\begin{theorem}
\label{thm:three-dimensional}
For every
\[
 \Theta\in\left(\frac{\pi}{2},\pi\right),
\]
there exist a smooth, bounded, uniformly convex domain
\(\Omega_\Theta\Subset\mathbb R^3\) and a unique solution
\[
 u_\Theta\in C^\infty(\Omega_\Theta)
 \cap C^{0,1}(\overline{\Omega_\Theta})
\]
of
\begin{equation}\label{eq:3d-main-problem}
 \begin{cases}
  \F_3(D^2u_\Theta)=\Theta&\text{in }\Omega_\Theta,\\
  u_\Theta=0&\text{on }\partial\Omega_\Theta,
 \end{cases}
\end{equation}
such that there exists a negative regular value \(c_\Theta\) for which
the sublevel set
\[
 \{u_\Theta<c_\Theta\}
\]
is nonconvex.
\end{theorem}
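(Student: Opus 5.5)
The plan is to take a long, slowly varying convex "bottle'' in $\R^3$ and let its length go to infinity. In that limit the solution is governed by an explicit two-dimensional radial problem, and I would then check that the resulting sublevel sets have a strict nonconvexity that survives perturbation back to finite length.

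\emph{Step 1: the two-dimensional reduction.} If $v=v(x_1,x_2)$ then $D^2u$ has eigenvalues $\lambda_1,\lambda_2,0$, so $\F_3(D^2u)=\F_2(D^2v)$. For $\Theta\in(\pi/2,\pi)$ this is the two-dimensional equation in the range $\Theta\ge (2-1)\pi/2$, where solutions are convex.

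Let $V$ be the radial solution of $\F_2(D^2V)=\Theta$ in the unit disk with $V=0$ on the unit circle. It solves the ODE $\arctan V''(s)+\arctan(V'(s)/s)=\Theta$, so its profile is computable. The equation is invariant under $u\mapsto r^2u(x/r)$. Hence $V_r(x')=r^2V(x'/r)$ is the solution on the disk of radius $r$, and $U_r(x',x_3)=V_r(x')$ solves \eqref{eq:3d-main-problem} on the infinite cylinder $B_r\times\R$.

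\emph{Step 2: the limiting profile.} Take a smooth, positive, uniformly concave function $r(t)$ on $[-1,1]$ that vanishes at $t=\pm1$ like a square root. Set
\[
\Omega_L=\{(x',x_3):|x'|<r(x_3/L)\},
\]
smoothed near the tips so that it is uniformly convex. My candidate limiting profile is
\[
u_L(x',Lt)\approx r(t)^2\,V\bigl(x'/r(t)\bigr).
\]
For $c<0$, the sublevel set $\{u_L<c\}$ is then approximately the body of revolution whose section at height $t$ has radius
\[
\rho(t)=r(t)\,w\bigl(c/r(t)^2\bigr),
\]
where $w$ is the inverse of the radial profile $V$. Because of the stretching by $L$, convexity in the physical variables reduces to concavity of $\rho$ in $t$. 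The key design step is therefore to choose the concave function $r$ and the level $c$ so that $\rho''(t_0)>0$ at some $t_0$ with $c/r(t_0)^2$ in the interior of the range of $V$.

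Since
\[
\rho''=\partial_t^2\bigl[r\,w(c/r^2)\bigr]
\]
contains the term $r\,w''(c/r^2)\,(\partial_t(c/r^2))^2$, positivity can be forced in two ways. One is to let $r$ vary steeply, though still concavely, near a height where $w''$ has a favorable sign and dominates. The other is to use a piecewise-almost-linear $r$ with a small concave corner, so that the $w''$ term beats $r''$. Determining the sign of $w''$, equivalently the convexity and concavity of $V$ along the level range, comes from the explicit ODE. This is a one-dimensional computation that I would do explicitly, possibly numerically checked and then proved with ODE comparison.

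\emph{Step 3: justifying the adiabatic limit.} This is the main obstacle. I must show that
\[
\tilde u_L(x',t):=u_L(x',Lt)\to r(t)^2V\bigl(x'/r(t)\bigr)
\]
in $C^2$ on compact subsets away from the tips. My first attempt is barriers. For a fixed height $t_0$, compare $u_L$ with $U_{r_\pm}$ on cylinders of radius $r_\pm=r(t_0)\pm\delta$. Because $r$ is concave, $\Omega_L$ lies inside the cone and the cylinder tangent at $t_0$ up to an $O(1/L)$ error in slope. To handle the ends, add a small correction such as $\pm\e|x_3-Lt_0|^2/L^2$, using that the phase is supercritical; by concavity of $\F_3$ on the relevant branch, the linearization controls such perturbations. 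This yields uniform convergence of $\tilde u_L$.

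Upgrading to $C^2$ convergence in the unstretched $x'$ variables, and in $x_3$ on unit scales, uses interior $C^{2,\alpha}$ estimates for the supercritical equation, which hold since the level set is convex. The same estimates show that $\partial_{x_3}u_L=O(1/L)$ and $\partial_{x_3}^2u_L=O(1/L^2)$. With that, the second fundamental form of $\{u_L=c\}$ converges after the stretching in $x_3$, which is what the concavity test in Step 2 needs.

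\emph{Step 4: conclusion.} Since $\rho''(t_0)>0$ strictly, for $L$ large the level surface $\{u_L=c\}$ has a point where a principal curvature, computed from $D^2u_L$ restricted to the tangent plane and divided by $|Du_L|$, has the wrong sign. Here $|Du_L|$ stays bounded below, by convergence to the nondegenerate radial profile with $c$ in its interior, so $c$ is a regular value. Hence $\{u_L<c\}$ is nonconvex. Existence, uniqueness and the stated regularity of $u_\Theta$ on the smooth uniformly convex domain $\Omega_\Theta=\Omega_L$ come from the solvability theory quoted in the introduction for $(n-2)\pi/2\le\Theta<n\pi/2$.
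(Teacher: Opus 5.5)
There is a genuine gap. The ``key design step'' of Step 2 cannot be carried out, so Steps 3--4 have nothing to transfer.

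The radial solution you planned to study through the ODE is explicit. With $\lambda=\tan(\Theta/2)$, the quadratic $V(x')=\frac{\lambda}{2}(|x'|^2-1)$ has $D^2V=\lambda I$. Hence $\F_2(D^2V)=2\arctan\lambda=\Theta$, and by uniqueness it is the Dirichlet solution on the unit disk. So $w(\nu)=\sqrt{1+2\nu/\lambda}$, and $w''<0$ on the whole range: there is no ``favorable sign''. Your limiting radius is then
\[
 \rho(t)=\sqrt{r(t)^2-k},\qquad k=-\frac{2c}{\lambda}>0,
 \qquad
 \rho''=-\frac{k\,r'^2}{(r^2-k)^{3/2}}+\frac{r\,r''}{(r^2-k)^{1/2}} .
\]
Both terms are nonpositive, and the second is strictly negative when $r''<0$. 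Equivalently, $\rho=f\circ r$ with $f(r)=\sqrt{r^2-k}$ increasing and concave. So no concave $r$, whether steep or with a concave corner, gives $\rho''>0$.

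Within your own framework, the corrections to $u_L$ are $O(L^{-2})$ and smooth in $(x',x_3/L)$. They therefore change the $x_3$-curvature of $\{u_L=c\}$ only by $O(L^{-4})$, against a leading term $L^{-2}\rho''$ that is strictly negative. Away from the tips, every negative sublevel set of your $u_L$ is convex for large $L$.

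The missing idea is that the slices must be genuinely non-radial. In the paper's notation your family corresponds to constant $H$ and $J$. For these the slow-direction Schur quantity is $\kappa J+(\kappa-\alpha)\kappa^2H^2/|z|^2$. This is decreasing in $|z|$, so positivity on the outer circle forces positivity on every inner level.

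The paper instead generates exact two-dimensional slices by a partial Legendre transform of $Q=\frac{x^2-p^2}{2}+sH+\frac{s^2}{2}J$, with $H=\operatorname{Re}(x+\ii p)^m$ and $m\gamma>1$. Here $\gamma=-\cos\Theta>0$, which is exactly where $\Theta>\pi/2$ enters. The quantity $E$ then carries angular modes whose radial weights do not match those of its harmonic extension $\widehat E$. Taking $J$ to be a negative multiple of $\widehat E$ plus a small positive constant keeps the outer circle strictly convex. Meanwhile the non-harmonic defect $E-\widehat E=-\Psi_*<0$ at $(\rho_*,\pi/(2m))$ produces a negative tangential direction on an inner level.

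Your adiabatic scaling, barrier comparison and transfer step are close in spirit to the paper's rescaling $s=\e X$, its $O(\e^2)$ phase residual and its comparison argument. The paper needs only $C^0$ closeness plus a midpoint criterion, not $C^2$ convergence. But without a non-radial seed of this kind, these steps cannot produce a nonconvex level.
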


The proof consists of a local construction and a perturbative argument.  The main difficulty is to find a suitable local model.  The leading-order structure of our ansatz is partly inspired by Wang and Yuan \cite{wang2013singular}, while the perturbative argument is similar to those used by Wang \cite{Wang14} and, more recently, by Zhang \cite{zhang2026strictly}.  We begin with a family of two-dimensional Monge--Ampere solutions written in partial Legendre coordinates.  By adding suitable harmonic perturbations, we make the tangential Hessian positive along the central circle in the zero level set, while retaining a negative tangential direction on a lower level set.  A quartic term then closes the local geometry into a bounded uniformly convex domain.  After an anisotropic dilation, the error in the three-dimensional phase is reduced to \(O(\varepsilon^2)\).  Finally, a comparison argument transfers the strict midpoint defect of the local model to the exact Dirichlet solution, thereby producing a nonconvex sublevel set.

\section{Construction of the local model}

Fix
\[
 \Theta\in\left(\frac{\pi}{2},\pi\right)
\]
and set
\begin{equation}\label{eq:3d-parameters}
 \alpha=-\cot\Theta>0,\qquad
 \kappa=\csc\Theta=\sqrt{1+\alpha^2},\qquad
 \gamma=\frac{\alpha}{\kappa}\in(0,1).
\end{equation}
We shall repeatedly use
\begin{equation}\label{eq:3d-parameter-identity}
 (\kappa-\alpha)(\kappa+\alpha)=1,
 \qquad \kappa-\alpha>0.
\end{equation}

\subsection{Exact two-dimensional sections}

\begin{lemma}\label{lem:3d-phase-reduction}
Let $\phi$ be strictly convex in two variables and satisfy
\[
 \det D^2\phi=1.
\]
Then
\[
 B=\alpha I+\kappa D^2\phi
\]
is positive definite and
\[
 \F_2(B)=\Theta.
\]
\end{lemma}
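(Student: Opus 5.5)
Since $\phi$ is strictly convex with $\det D^2\phi=1$, the symmetric matrix $D^2\phi$ has two eigenvalues $\mu_1,\mu_2>0$ with $\mu_1\mu_2=1$. The matrix $B=\alpha I+\kappa D^2\phi$ is diagonalized in the same orthonormal frame, with eigenvalues $\lambda_j=\alpha+\kappa\mu_j$. Because $\alpha>0$, $\kappa>0$ and $\mu_j>0$, each $\lambda_j>0$, so $B$ is positive definite. It therefore suffices to show the pointwise scalar identity $\arctan\lambda_1+\arctan\lambda_2=\Theta$.

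For the phase I use the addition formula for the argument of complex numbers. Since $\lambda_j>0$, we have $\arctan\lambda_j=\arg(1+\ii\lambda_j)\in(0,\pi/2)$. Hence
\[
\arctan\lambda_1+\arctan\lambda_2=\arg\bigl((1+\ii\lambda_1)(1+\ii\lambda_2)\bigr)\in(0,\pi),
\]
where $\arg$ is taken in $(0,\pi)$; this is legitimate because the sum of the two angles lies in $(0,\pi)$. The product equals $(1-\lambda_1\lambda_2)+\ii(\lambda_1+\lambda_2)$ and has positive imaginary part. So the sum of angles is determined by the ratio of real part to imaginary part through $\cot$. Specifically, it equals $\Theta$ exactly when
\[
\frac{1-\lambda_1\lambda_2}{\lambda_1+\lambda_2}=\cot\Theta=-\alpha.
\]

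Now compute directly using $\mu_1\mu_2=1$ and $\kappa^2=1+\alpha^2$ from \eqref{eq:3d-parameter-identity}. First,
\[
\lambda_1\lambda_2=\alpha^2+\alpha\kappa(\mu_1+\mu_2)+\kappa^2=1+2\alpha^2+\alpha\kappa s,
\]
where $s=\mu_1+\mu_2$. Second, $\lambda_1+\lambda_2=2\alpha+\kappa s$. Then
\[
1-\lambda_1\lambda_2=-2\alpha^2-\alpha\kappa s=-\alpha(2\alpha+\kappa s)=-\alpha(\lambda_1+\lambda_2),
\]
which is exactly the required identity.

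Since $\cot$ is injective on $(0,\pi)$ and $\Theta\in(\pi/2,\pi)$, we conclude $\F_2(B)=\Theta$. The only delicate point is the branch bookkeeping, namely confirming that the sum lies in $(0,\pi)$ so that the cotangent identity pins down $\Theta$ rather than $\Theta-\pi$. Positivity of each $\lambda_j$ settles this.
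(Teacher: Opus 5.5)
Your proposal is correct and follows essentially the paper's argument: your identity $1-\lambda_1\lambda_2=-\alpha(\lambda_1+\lambda_2)$ is exactly the paper's $\det B=1+\alpha\tr B$, which you obtain by expanding in the eigenvalues rather than via $\det(B-\alpha I)=\kappa^2$. The only difference is in the branch bookkeeping. You use injectivity of $\cot$ on $(0,\pi)$, which needs only $\lambda_j>0$. The paper uses $\tan$ and places the angle sum in $(\pi/2,\pi)$ via $\det B>1$.
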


\begin{proof}
Since $D^2\phi>0$, the matrix $B$ is positive definite.  Moreover,
\[
 \det(B-\alpha I)=\kappa^2.
\]
Using $\kappa^2=1+\alpha^2$ and $\det D^2\phi=1$, we obtain
\[
 \det B=1+\alpha\tr B.
\]
If $\lambda_1,\lambda_2>0$ are the eigenvalues of $B$, then
\[
 \tan\bigl(\arctan\lambda_1+\arctan\lambda_2\bigr)
 =\frac{\lambda_1+\lambda_2}{1-\lambda_1\lambda_2}
 =-\frac1\alpha=\tan\Theta.
\]
Since $\det B>1$, the sum of the two eigenangles belongs to
$(\pi/2,\pi)$ and hence equals $\Theta$.
\end{proof}

We use a partial Legendre representation of the two-dimensional
Monge--Amp\`ere equation.  
Regard s as a parameter and let
$Q=Q(s,x,p)$ be harmonic in $(x,p)$ with $Q_{pp}<0$.  Define the partial Legendre transform 
\begin{equation}\label{eq:3d-partial-Legendre}
 y=-Q_p(s,x,p),\qquad
 \phi(s,x,y)=Q(s,x,p)+py,
\end{equation}
where $p=p(s,x,y)$ is determined by the first relation. 
\begin{lemma}[Partial Legendre identity]\label{lem:3d-partial-Legendre}
For the partial Legendre transform of the above $Q$, we have, for each fixed $s$,
\[
\det D_{x,y}^2 \phi = 1.
\]
Whenever $D_{x,y}^2 \phi$ is positive definite, $\phi$ is a strictly convex solution of the two-dimensional Monge--Ampère equation.
\end{lemma}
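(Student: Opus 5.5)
Fix $s$ and suppress it from the notation. The plan is to compute $D^2_{x,y}\phi$ directly through the change of variables $(x,p)\mapsto(x,y)$, $y=-Q_p(x,p)$. This map is locally invertible because $\partial y/\partial p=-Q_{pp}>0$, so $p=p(x,y)$ is well defined and smooth (locally, and globally on any region where the map is injective, which we assume implicitly in the definition). Differentiating the relation $y=-Q_p(x,p(x,y))$ gives
\[
 p_y=-\frac{1}{Q_{pp}},\qquad p_x=-\frac{Q_{xp}}{Q_{pp}}.
\]

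Next compute the first derivatives of $\phi=Q+py$ via the envelope identity. We have $\phi_x=Q_x+(Q_p+y)p_x=Q_x(x,p)$ and $\phi_y=(Q_p+y)p_y+p=p$, since $Q_p+y=0$ by definition. Differentiating once more:
\[
 \phi_{yy}=p_y=-\frac1{Q_{pp}},\qquad
 \phi_{xy}=p_x=-\frac{Q_{xp}}{Q_{pp}},\qquad
 \phi_{xx}=Q_{xx}+Q_{xp}p_x=Q_{xx}-\frac{Q_{xp}^2}{Q_{pp}}.
\]
Hence
\[
 \det D^2_{x,y}\phi=-\frac{Q_{xx}Q_{pp}-Q_{xp}^2}{Q_{pp}^2}+\frac{Q_{xp}^2}{Q_{pp}^2}\cdot(-1)\cdot(-1)\cdot\frac{1}{1}
 \;=\;-\frac{Q_{xx}}{Q_{pp}} .
\]
Written out: $\phi_{xx}\phi_{yy}-\phi_{xy}^2=-\frac{Q_{xx}}{Q_{pp}}+\frac{Q_{xp}^2}{Q_{pp}^2}-\frac{Q_{xp}^2}{Q_{pp}^2}=-Q_{xx}/Q_{pp}$. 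Harmonicity in $(x,p)$ gives $Q_{xx}=-Q_{pp}$, so the determinant equals $1$.

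For the final assertion, note that $\phi_{yy}=-1/Q_{pp}>0$ holds automatically. If $D^2_{x,y}\phi>0$ on a convex domain in $(x,y)$, then $\phi$ is strictly convex there, and by the computation above it solves $\det D^2\phi=1$. Lemma~\ref{lem:3d-phase-reduction} then applies to each section. Indeed, given $\det D^2\phi=1$ and $\phi_{yy}>0$, positivity is in fact automatic: the trace is positive because $\phi_{xx}=(1+\phi_{xy}^2)/\phi_{yy}>0$.

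There is no real obstacle. The only delicate point is bookkeeping: justifying that $p(x,y)$ is globally well defined on the region under consideration, which follows from strict monotonicity of $p\mapsto -Q_p$. Smooth dependence on the parameter $s$ is inherited through the implicit function theorem.
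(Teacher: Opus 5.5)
Your proof is correct and follows the paper's argument: the envelope identities $\phi_x=Q_x$, $\phi_y=p$, the resulting formulas for $D^2_{x,y}\phi$, the cancellation $\det D^2_{x,y}\phi=-Q_{xx}/Q_{pp}=1$ via harmonicity, and positive definiteness from $\phi_{yy}=-1/Q_{pp}>0$ together with $\det=1$. One small slip: your first determinant display is garbled, since as written it leaves an extra $2Q_{xp}^2/Q_{pp}^2$; the ``written out'' line right after it is the correct computation.
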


\begin{proof}
Differentiating \eqref{eq:3d-partial-Legendre} gives
\[
 \phi_y=p,\qquad \phi_x=Q_x,
\]
and hence
\[
 \phi_{yy}=-\frac1{Q_{pp}},\qquad
 \phi_{xy}=-\frac{Q_{xp}}{Q_{pp}},\qquad
 \phi_{xx}=Q_{xx}-\frac{Q_{xp}^2}{Q_{pp}}.
\]
It follows that
\[
 \det D^2_{x,y}\phi=-\frac{Q_{xx}}{Q_{pp}}=1,
\]
because $Q_{xx}+Q_{pp}=0$.  Since $Q_{pp}<0$, we have
$\phi_{yy}>0$; together with $\det D^2\phi=1$, this gives positive
definiteness.
\end{proof}

Take
\begin{equation}\label{eq:3d-Q-family}
 Q(s,x,p)=\frac{x^2-p^2}{2}
 +sH(x,p)+\frac{s^2}{2}J(x,p),
\end{equation}
where $H$ and $J$ are harmonic.  At $s=0$, differentiation of the
implicit relation in \eqref{eq:3d-partial-Legendre} gives
\begin{equation}\label{eq:3d-parameter-jets}
\begin{gathered}
 p=y,\qquad
 \phi=\frac{x^2+y^2}{2},\qquad
 \phi_s=H,\\
 D_{x,y}\phi_s=(H_x,H_p),\qquad
 \phi_{ss}=J+H_p^2.
\end{gathered}
\end{equation}
Indeed, $p_s=-\frac{Q_{sp}}{Q_{pp}}=H_p$ at $s=0$.
Here and below, expressions involving $H$, $J$, or their derivatives on
$s=0$ are evaluated at $(x,p)=(x,y)$.
Fix a closed disk in the $(x,y)$ variables and a slightly larger disk in
the $(x,p)$ variables.  Since $Q_{pp}=-1$ when $s=0$, after shrinking the
$s$-interval we have $Q_{pp}\le-1/2$ on the larger disk.  The map
$p\mapsto-Q_p(s,x,p)$ is then uniformly increasing, and
\eqref{eq:3d-partial-Legendre} defines $p=p(s,x,y)$ smoothly throughout
the smaller cylinder.

Define
\begin{equation}\label{eq:3d-U}
 U(s,x,y)
 =\frac{\alpha}{2}(x^2+y^2)
  +\kappa\phi(s,x,y)-\frac{\alpha+\kappa}{2}.
\end{equation}
After restricting to a sufficiently small neighborhood of $s=0$ and
the unit disk in the $(x,y)$ variables, the function $\phi(s,\cdot)$ is
strictly convex.  By Lemma \ref{lem:3d-phase-reduction},
\begin{equation}\label{eq:3d-exact-slice}
 \F_2(D^2_{x,y}U(s,\cdot))=\Theta
\end{equation}
for every such $s$.

\subsection{A local curvature model}

We first record the tangential Schur test used below.

\begin{lemma}[Tangential Schur test]\label{lem:3d-Schur}
Let $f=f(s,z)$, $z\in\R^2$, and write at a point
\[
 D^2f=
 \begin{pmatrix}
  a&b^{\mathsf T}\\
  b&B
 \end{pmatrix},
 \qquad
 Df=(g,q),
\]
where $B>0$ and $q\ne0$.  The restriction of $D^2f$ to the tangent
space of the level of $f$ is positive definite if and only if
\begin{equation}\label{eq:3d-Schur-quantity}
 \mathcal S_f:=
 a-b^{\mathsf T}B^{-1}b
 +\frac{(g-q^{\mathsf T}B^{-1}b)^2}
 {q^{\mathsf T}B^{-1}q}>0.
\end{equation}
\end{lemma}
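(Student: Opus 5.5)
This is a linear-algebra fact: a quadratic form restricted to a hyperplane. The plan is to complete the square in the \(z\)-directions, using \(B>0\), and reduce to a one-dimensional condition. The variables are \((s,z)\in\R^3\), with \(D^2f\) written in block form relative to the splitting \(\R\times\R^2\).

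The tangent space to the level set at the point is \(T=\{(\sigma,\zeta):g\sigma+q^{\mathsf T}\zeta=0\}\), and the form to study is
\[
\mathcal Q(\sigma,\zeta)=a\sigma^2+2\sigma\, b^{\mathsf T}\zeta+\zeta^{\mathsf T}B\zeta .
\]
Substitute \(\zeta=\eta-\sigma B^{-1}b\). Then
\[
\mathcal Q=(a-b^{\mathsf T}B^{-1}b)\sigma^2+\eta^{\mathsf T}B\eta,
\]
and the constraint becomes \((g-q^{\mathsf T}B^{-1}b)\sigma+q^{\mathsf T}\eta=0\). This change of variables is a linear isomorphism of \(\R^3\) that maps \(T\) onto the hyperplane defined by the new constraint. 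Set \(c=g-q^{\mathsf T}B^{-1}b\) and \(m=a-b^{\mathsf T}B^{-1}b\).

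Next, fix \(\sigma\) and minimize \(\eta^{\mathsf T}B\eta\) over the affine line (in \(\R^2\)) given by \(q^{\mathsf T}\eta=-c\sigma\). Since \(q\ne0\) and \(B>0\), Cauchy--Schwarz in the \(B\)-inner product gives the minimum
\[
\frac{c^2\sigma^2}{q^{\mathsf T}B^{-1}q},
\]
attained at \(\eta=-c\sigma B^{-1}q/(q^{\mathsf T}B^{-1}q)\). Therefore
\[
\min\{\mathcal Q:(\sigma,\eta)\in T,\ \sigma\ \text{fixed}\}=\mathcal S_f\,\sigma^2 .
\]

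The equivalence then follows by splitting into two cases.

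If \(\mathcal S_f>0\), take a nonzero tangent vector. If \(\sigma\ne0\), then \(\mathcal Q\ge\mathcal S_f\sigma^2>0\). If \(\sigma=0\), then \(\eta\ne0\) and \(\mathcal Q=\eta^{\mathsf T}B\eta>0\) because \(B>0\).

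Conversely, if \(\mathcal S_f\le0\), take \(\sigma=1\) and the minimizing \(\eta\). This gives a nonzero tangent vector with \(\mathcal Q=\mathcal S_f\le0\), so the restriction is not positive definite.

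The only point needing care is checking that the substitution preserves the tangent space and that the constrained minimum is exactly \(\mathcal S_f\sigma^2\), including the degenerate case \(c=0\). In that case the minimizer is \(\eta=0\), and the formula still holds.
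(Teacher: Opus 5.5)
Your proposal is correct and takes essentially the same route as the paper: complete the square in the $z$-variables, then minimize $\eta^{\mathsf T}B\eta$ over the affine constraint in the $B$-inner product (you use Cauchy--Schwarz, the paper uses $B$-orthogonal projection), and treat tangent vectors with vanishing $s$-component separately using $B>0$. Your converse step, taking $\sigma=1$ together with the minimizing $\eta$, makes the ``only if'' direction slightly more explicit than in the paper.
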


\begin{proof}
A tangent vector with nonzero $s$ component may be normalized as
$(1,\xi)$, where $q\cdot\xi=-g$.  Completing the square gives
\[
 D^2f[(1,\xi),(1,\xi)]
 =a-b^{\mathsf T}B^{-1}b
 +(\xi+B^{-1}b)^{\mathsf T}B(\xi+B^{-1}b).
\]
Thus, we wish to minimize
\[
F(\xi):=(\xi+B^{-1}b)^T B(\xi+B^{-1}b)
\]
subject to the affine constraint
\[
q\cdot \xi = -g.
\]

Let
\[
\xi_0:=-B^{-1}b.
\]
Then
\[
F(\xi)=(\xi-\xi_0)^T B(\xi-\xi_0)=\|\xi-\xi_0\|_B^2,
\]
where $\|\cdot\|_B$ denotes the norm induced by the inner product
\[
\langle u,v\rangle_B:=u^T B v.
\]
Thus the problem is to find the point on the hyperplane
\[
H:=\{\xi: q\cdot \xi=-g\}
\]
that is closest to $\xi_0$ in the $B$-metric.

In the $B$-inner product, the normal direction to $H$ is given by $B^{-1}q$. Indeed, for any tangent vector $v$ to $H$ (i.e., $q\cdot v=0$), we have
\[
\langle B^{-1}q,v\rangle_B=(B^{-1}q)^T B v=q^T v=0.
\]
Therefore, the closest point $\xi^*\in H$ to $\xi_0$ must lie on the line
\[
\xi^*=\xi_0+tB^{-1}q
\]
for some $t\in\mathbb R$.

Since $\xi^*\in H$, we substitute into the constraint:
\[
q\cdot(\xi_0+tB^{-1}q)=-g.
\]
Using $\xi_0=-B^{-1}b$, we get
\[
-q^T B^{-1}b + t(q^T B^{-1}q)=-g.
\]
Solving for $t$,
\[
t=\frac{q^T B^{-1}b-g}{q^T B^{-1}q}.
\]

The minimal value of $F$ is therefore
\[
F_{\min}=\|\xi^*-\xi_0\|_B^2
      =\|tB^{-1}q\|_B^2
      =t^2(q^T B^{-1}q).
\]
Substituting the expression for $t$,
\[
F_{\min}
=\frac{(q^T B^{-1}b-g)^2}{(q^T B^{-1}q)^2}(q^T B^{-1}q)
=\frac{(g-q^T B^{-1}b)^2}{q^T B^{-1}q}.
\]

Hence the minimum of the quadratic form over the tangent space is
\[
a-b^T B^{-1}b+\frac{(g-q^T B^{-1}b)^2}{q^T B^{-1}q}.
\]
The remaining tangent direction lies
entirely in the $z$ variables and is positive because $B>0$.
\end{proof}

At $s=0$, write $z=(x,y)$ and $r=|z|$.  From
\eqref{eq:3d-parameter-jets}--\eqref{eq:3d-U},
\[
 B=(\alpha+\kappa)I,\qquad q=(\alpha+\kappa)z,\qquad
 g=\kappa H,\qquad b=\kappa DH,\qquad
 a=\kappa(J+H_p^2).
\]
Substitution into \eqref{eq:3d-Schur-quantity} yields
\begin{equation}\label{eq:3d-S-E}
 \mathcal S_U=\kappa J+(\kappa-\alpha)\kappa^2 E,
\end{equation}
where
\begin{equation}\label{eq:3d-E}
 E=-H_x^2+\gamma H_p^2
 +\frac{(H-z\cdot DH)^2}{r^2}.
\end{equation}

Choose an integer
\begin{equation}\label{eq:3d-m-choice}
 m\ge3,\qquad m\gamma>1,
\end{equation}
and put
\[
 H(x,p)=\operatorname{Re}(x+\ii p)^m.
\]
In polar coordinates $x+\ii p=re^{\ii\theta}$, a direct expansion gives
\begin{equation}\label{eq:3d-E-Fourier}
 E(r,\theta)=r^{2m-2}
 \left[
  A_0+A_-\cos\bigl(2(m-1)\theta\bigr)
  +A_+\cos(2m\theta)
 \right],
\end{equation}
where
\begin{equation}\label{eq:3d-A-coefficients}
 A_0=\frac{m^2\gamma-2m+1}{2},\qquad
 A_-=-\frac{m^2(1+\gamma)}2,\qquad
 A_+=\frac{(m-1)^2}{2}.
\end{equation}
Let
\begin{equation}\label{eq:3d-Ehat}
 \widehat E(x,p)
 =A_0+A_-\operatorname{Re}(x+\ii p)^{2m-2}
 +A_+\operatorname{Re}(x+\ii p)^{2m}
\end{equation}
be the harmonic extension of $E|_{\{r=1\}}$.  For $0<\rho<1$, set
\begin{equation}\label{eq:3d-Psi}
 \Psi(\rho)
 =A_0(1-\rho^{2m-2})
 +A_+\rho^{2m-2}(1-\rho^2).
\end{equation}
Then
\begin{equation}\label{eq:3d-Psi-limit}
 \lim_{\rho\uparrow1}\frac{\Psi(\rho)}{1-\rho}
 =m(m-1)(m\gamma-1)>0.
\end{equation}
Fix $\rho_*\in(0,1)$ sufficiently close to one that
\[
 \Psi_*:=\Psi(\rho_*)>0,
\]
and choose the second-order coefficient in \eqref{eq:3d-Q-family} as
\begin{equation}\label{eq:3d-J-choice}
 \eta=\frac{(\kappa-\alpha)\kappa}{2}\Psi_*,
 \qquad
 J=-(\kappa-\alpha)\kappa\,\widehat E+\eta.
\end{equation}

\begin{proposition}[The local model]\label{prop:3d-seed}
The central zero-level circle
\[
 \{s=0,\ U=0\}=\{s=0,\ |z|=1\}
\]
has positive definite tangential Hessian, and the corresponding Schur
quantity is
\begin{equation}\label{eq:3d-outer-margin}
 \mathcal S_U=\frac{(\kappa-\alpha)\kappa^2}{2}\Psi_*>0.
\end{equation}
On the other hand, at
\[
 p_*=(0,z_*),\qquad
 z_*=\rho_*
 \left(\cos\frac{\pi}{2m},\sin\frac{\pi}{2m}\right),
\]
one has
\begin{equation}\label{eq:3d-inner-level}
 U(p_*)=c_*:=\frac{\alpha+\kappa}{2}(\rho_*^2-1)<0,
\end{equation}
and the restriction of $D^2U$ to the tangent space of
$\{U=c_*\}$ has a negative direction.  More precisely, the Schur quantity
at $p_*$ is
\begin{equation}\label{eq:3d-inner-margin}
 -\frac{(\kappa-\alpha)\kappa^2}{2}\Psi_*<0.
\end{equation}
\end{proposition}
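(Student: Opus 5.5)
The proof evaluates the Schur quantity \eqref{eq:3d-S-E} at the two points, using only the jets \eqref{eq:3d-parameter-jets} at $s=0$ and the choice \eqref{eq:3d-J-choice} of $J$. The hypotheses of Lemma~\ref{lem:3d-Schur} hold at both points: at $s=0$ we have $B=(\alpha+\kappa)I>0$ and $q=(\alpha+\kappa)z\neq0$ whenever $z\ne0$. So in each case the sign of the restricted Hessian is the sign of $\mathcal S_U$.

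\textbf{The level sets.} Since $\phi(0,z)=|z|^2/2$ by \eqref{eq:3d-parameter-jets}, definition \eqref{eq:3d-U} gives
\[
 U(0,z)=\frac{\alpha+\kappa}{2}\bigl(|z|^2-1\bigr).
\]
Hence $\{s=0,\ U=0\}=\{s=0,\ |z|=1\}$, and $U(p_*)=\frac{\alpha+\kappa}{2}(\rho_*^2-1)=c_*<0$, which is \eqref{eq:3d-inner-level}.

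\textbf{Rewriting $\mathcal S_U$.} Substituting $J=-(\kappa-\alpha)\kappa\widehat E+\eta$ into \eqref{eq:3d-S-E} gives
\[
 \mathcal S_U=(\kappa-\alpha)\kappa^2\,(E-\widehat E)+\kappa\eta .
\]
It therefore suffices to compute $E-\widehat E$ at the two points, using \eqref{eq:3d-E-Fourier} and \eqref{eq:3d-Ehat}.

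\textbf{The outer circle $r=1$.} Here $\widehat E=E$, because $\widehat E$ is by construction the harmonic extension of $E|_{\{r=1\}}$. Thus
\[
 \mathcal S_U=\kappa\eta=\frac{(\kappa-\alpha)\kappa^2}{2}\Psi_*>0,
\]
which is \eqref{eq:3d-outer-margin}. Lemma~\ref{lem:3d-Schur} then gives a positive definite tangential Hessian.

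\textbf{The inner point $p_*$.} Here $r=\rho_*$ and $\theta=\pi/(2m)$. In the polar form,
\[
 \widehat E=A_0+A_-\rho_*^{2m-2}\cos\bigl(2(m-1)\theta\bigr)+A_+\rho_*^{2m}\cos(2m\theta).
\]
The $A_-$ terms of $E$ and $\widehat E$ both carry the factor $\rho_*^{2m-2}$, so they cancel in the difference. This leaves
\[
 E-\widehat E=-A_0\bigl(1-\rho_*^{2m-2}\bigr)+A_+\cos(2m\theta)\,\rho_*^{2m-2}\bigl(1-\rho_*^2\bigr).
\]
The angle is chosen precisely so that $\cos(2m\theta)=\cos\pi=-1$. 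Then $E-\widehat E=-\Psi(\rho_*)=-\Psi_*$ by \eqref{eq:3d-Psi}, and
\[
 \mathcal S_U=-(\kappa-\alpha)\kappa^2\Psi_*+\frac{(\kappa-\alpha)\kappa^2}{2}\Psi_*=-\frac{(\kappa-\alpha)\kappa^2}{2}\Psi_*,
\]
which is \eqref{eq:3d-inner-margin}.

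\textbf{Extracting the negative direction.} The proof of Lemma~\ref{lem:3d-Schur} shows that $\mathcal S_U$ is the minimum of the quadratic form over tangent vectors $(1,\xi)$. The minimizer $(1,\xi^*)$, with $\xi^*=-B^{-1}b+tB^{-1}q$, is therefore a tangent vector to $\{U=c_*\}$ on which $D^2U$ is negative.

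\textbf{Main obstacle.} The only delicate point is the inner-point bookkeeping: the cancellation of the $A_-$ modes and the choice of angle that turns the $A_+$ term into $-A_+\rho_*^{2m-2}(1-\rho_*^2)$. Positivity of $\Psi_*$ is already guaranteed by \eqref{eq:3d-Psi-limit} and the choice of $\rho_*$. I would also double-check that \eqref{eq:3d-E-Fourier} matches \eqref{eq:3d-E} for $H=\operatorname{Re}(x+\ii p)^m$, since everything rests on that expansion.
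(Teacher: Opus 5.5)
Your proposal is correct and follows the paper's proof essentially step for step. You write $\mathcal S_U=(\kappa-\alpha)\kappa^2(E-\widehat E)+\kappa\eta$, use $E=\widehat E$ on $r=1$ and $E-\widehat E=-\Psi_*$ at $\theta=\pi/(2m)$, and then apply Lemma~\ref{lem:3d-Schur} with $q=(\alpha+\kappa)z\ne0$. Your explicit use of the minimizer from the proof of that lemma to produce the negative tangent direction is a slightly more careful version of the paper's last step. The expansion \eqref{eq:3d-E-Fourier} that you flagged does hold for $H=\operatorname{Re}(x+\ii p)^m$.
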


\begin{proof}
On $r=1$, \eqref{eq:3d-E-Fourier} and
\eqref{eq:3d-Ehat} give $E=\widehat E$.  Equations
\eqref{eq:3d-S-E} and \eqref{eq:3d-J-choice} therefore yield
\[
 \mathcal S_U=\kappa\eta
 =\frac{(\kappa-\alpha)\kappa^2}{2}\Psi_*.
\]
On the other tangential direction, which lies in the $z$-plane, the
quadratic form takes the value $\alpha+\kappa>0$.

Set $\theta_*=\pi/(2m)$.  Since $\cos(2m\theta_*)=-1$,
\eqref{eq:3d-E-Fourier}--\eqref{eq:3d-Psi} give
\[
 E(\rho_*,\theta_*)-\widehat E(\rho_*,\theta_*)=-\Psi_*.
\]
It follows from \eqref{eq:3d-S-E} and
\eqref{eq:3d-J-choice} that
\[
 \mathcal S_U(p_*)
 =\kappa\eta-(\kappa-\alpha)\kappa^2\Psi_*
 =-\frac{(\kappa-\alpha)\kappa^2}{2}\Psi_*.
\]
Finally, $D_zU(p_*)=(\alpha+\kappa)z_*\ne0$.  The negative level is regular at
$p_*$, and \cref{lem:3d-Schur} gives the required negative tangential
direction.
\end{proof}

\begin{remark}\label{rem:3d-explicit}
For $\Theta=3\pi/4$, one may take
\[
 \alpha=1,\quad \kappa=\sqrt2,\quad
 \gamma=\frac1{\sqrt2},\quad m=3,\quad \rho_*=\frac12.
\]
Then
\[
 \Psi_*=\frac{135\sqrt2-144}{64}>0,
\]
and the corresponding Schur values are
\[
 \pm\frac{414-279\sqrt2}{64}.
\]
\end{remark}

\subsection{Closing the zero level}

Choose $R>1$ and $\sigma>0$ so that $U$ is defined on a neighborhood of
the closed cylinder
\[
 \mathcal C=(-\sigma,\sigma)\times B_R,\qquad
 \overline{\mathcal C}=[-\sigma,\sigma]\times\overline{B_R}.
\]
Since $U(0,z)=\frac{\alpha+\kappa}{2}(|z|^2-1)$, we may shrink $\sigma$ and
choose $b_0,c_0>0$ such that
\begin{equation}\label{eq:3d-compact-bounds}
 D_z^2U\ge b_0I>0,\qquad
 U(s,0)\le-c_0,\qquad
 U(s,z)\ge c_0\quad\text{when }|z|=R.
\end{equation}
For $M>0$, define
\begin{equation}\label{eq:3d-quartic-cap}
 V_M(s,z)=U(s,z)+Ms^4.
\end{equation}
Let \(\Omega_M\) be the connected component containing \((0,0)\) of
\begin{equation}\label{eq:3d-Omega-M}
 \{V_M<0\}\cap\mathcal C.
\end{equation}

\begin{lemma}[Closing the model]\label{lem:3d-cap}
For every sufficiently large $M$, the component $\Omega_M$ defined in
\eqref{eq:3d-Omega-M} satisfies
\[
 \overline{\Omega_M}\Subset\mathcal C.
\]
Its boundary is smooth, connected and uniformly convex.

Moreover,
\begin{equation}\label{eq:3d-cap-phase}
 \F_2(D_z^2V_M)=\Theta
\end{equation}
throughout $\mathcal C$.  The point $p_*=(0,z_*)$ belongs to $\Omega_M$, and
\[
 V_M(p_*)=c_*,\qquad
 DV_M(p_*)=DU(p_*),\qquad
 D^2V_M(p_*)=D^2U(p_*).
\]
In particular, the Schur quantity of $V_M$ at $p_*$ is
\[
 \mathcal S_{V_M}(p_*)
 =-\frac{(\kappa-\alpha)\kappa^2}{2}\Psi_*<0,
\]
as in \eqref{eq:3d-inner-margin}.
\end{lemma}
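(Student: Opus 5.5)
The proof splits into an algebraic part (phase, data at \(p_*\)), a topological part (compact containment, regularity, connectedness) and a convexity estimate. Only the last needs real work.

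\emph{Algebraic part.} Because \(Ms^4\) does not depend on \(z\), we have \(D_z^2V_M=D_z^2U\). Hence \eqref{eq:3d-cap-phase} follows at once from \eqref{eq:3d-exact-slice}. The function \(s^4\) vanishes to third order at \(s=0\), so \(V_M\), \(DV_M\) and \(D^2V_M\) agree with \(U\), \(DU\) and \(D^2U\) on \(\{s=0\}\). In particular \(V_M(p_*)=c_*\), and \(\mathcal S_{V_M}(p_*)=\mathcal S_U(p_*)\) is given by \eqref{eq:3d-inner-margin} via Proposition \ref{prop:3d-seed}. On the slice \(s=0\), \(V_M=U=\frac{\alpha+\kappa}{2}(|z|^2-1)\). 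The straight segment from \((0,0)\) to \(p_*\) therefore lies in \(\{V_M<0\}\cap\mathcal C\), so \(p_*\in\Omega_M\).

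\emph{Topological part.}
\begin{itemize}
\item On \(|z|=R\) we have \(V_M\ge U\ge c_0>0\) by \eqref{eq:3d-compact-bounds}.
\item On \(|s|=\sigma\) we have \(V_M\ge \min_{\overline{\mathcal C}}U+M\sigma^4>0\) once \(M\) is large. Together these give \(\overline{\Omega_M}\Subset\mathcal C\).
\item \emph{Regularity of the zero level.} Suppose \(V_M=0\) and \(D_zV_M=D_zU=0\). Then \(z\) is the minimum point of the strictly convex function \(U(s,\cdot)\), so \(U\le U(s,0)\le-c_0\). This forces \(Ms^4\ge c_0\), i.e. \(|s|\ge (c_0/M)^{1/4}\). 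Consequently \(|\partial_sV_M|\ge 4M|s|^3-\|U_s\|_\infty\ge 4c_0^{3/4}M^{1/4}-C>0\), so \(DV_M\neq0\) and \(\partial\Omega_M\) is smooth.
\item Each \(s\)-slice of \(\{V_M<0\}\) is convex because \(U(s,\cdot)\) is convex. Once \(\partial\Omega_M\) is shown to have positive definite second fundamental form, Hadamard's theorem shows that \(\Omega_M\) is convex with connected boundary.
\end{itemize}

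\emph{Convexity estimate.} Fix a large constant \(K\) and split the boundary according to the size of \(s\).

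Region \(|s|\ge KM^{-1/2}\). Here \(D^2V_M=D^2U+12Ms^2\,e_s\otimes e_s\). For any tangent vector \((t,\xi)\),
\[
D^2V_M[(t,\xi),(t,\xi)]\ge b_0|\xi|^2+(12Ms^2-C)t^2-C|t||\xi|,
\]
where \(C\) bounds \(D^2U\) on \(\overline{\mathcal C}\). This is positive definite as soon as \(12K^2\ge C+C^2/b_0\), with a uniform lower bound. No information about \(q\) is needed here.

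Region \(|s|\le KM^{-1/2}\). On the boundary \(U=-Ms^4\in[-K^4/M,0]\), so for large \(M\) these points lie in a small neighbourhood of the circle \(\{s=0,|z|=1\}\). There \(q=D_zU\) is bounded away from zero, and \(\mathcal S_U\ge\frac{(\kappa-\alpha)\kappa^2}{4}\Psi_*\) by continuity from \eqref{eq:3d-outer-margin}. Pass from \(U\) to \(V_M\) in Lemma \ref{lem:3d-Schur}: \(a\) increases by \(12Ms^2\ge0\), and \(g\) changes by \(4Ms^3=O(K^3M^{-1/2})\), while \(b\), \(B\), \(q\) are unchanged. Hence \(\mathcal S_{V_M}\ge\mathcal S_U-O(M^{-1/2})>0\), and the lemma gives positive definiteness.

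Compactness of each region supplies uniform bounds, hence uniform convexity. The main obstacle is this two-scale split. The perturbation \(4Ms^3\) of \(g\) is not small on a fixed \(s\)-interval, so the threshold must be of order \(M^{-1/2}\): just large enough that \(12Ms^2\) beats the bounded Hessian of \(U\), yet small enough that both \(U\) and \(4Ms^3\) are negligible below it.
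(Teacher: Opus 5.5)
Your proof is correct, and its analytic core matches the paper's. You use the same split of the side boundary at $|s|\sim M^{-1/2}$. In the central layer you make the same observation: replacing $U$ by $V_M$ raises $a$ by $12Ms^2$ and shifts $g$ by $4Ms^3=O(M^{-1/2})$, while $b,B,q$ are unchanged.

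The difference is in the topological part. The paper constructs the minimizer curve $z(s)$, the function $h_M=\mu+Ms^4$ and the interval $(s_-,s_+)$. It writes $\Omega_M$ as a union of convex sections collapsing to two cap points $P_\pm$ and checks $h_M'(s_\pm)\neq0$ there. It then obtains connectedness from the radial parametrization of $\partial D_s$, and treats the caps separately in the convexity step because the Schur test needs $q\neq0$. You replace all of this with one observation: a point of $\{V_M=0\}$ with $D_zU=0$ must satisfy $Ms^4\ge c_0$, whence $|\partial_sV_M|\gtrsim M^{1/4}$. In the outer layer you require the full Hessian to be positive. This is equivalent to the paper's condition $a_0+12Ms^2>0$ but independent of $q$, so it covers the cap points automatically, since they have $|s|\ge(c_0/M)^{1/4}\gg KM^{-1/2}$. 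Your route is shorter and loses nothing used later in the paper. The paper's route buys an explicit picture of $\Omega_M$ (e.g.\ $|s_\pm|\approx M^{-1/4}$, caps as graphs) and a direct proof of connectedness.

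Connectedness is the one point you should tighten. Hadamard's theorem presupposes a connected hypersurface, so you must apply it to each component $\Sigma_i$ of $\partial\Omega_M$ separately.
\begin{itemize}
\item Positivity of $\mathrm{II}$ with respect to $\nu=DV_M/|DV_M|$ forces $\nu$ to be the outer normal of the convex body $K_i$ bounded by $\Sigma_i$.
\item Hence the connected set $\Omega_M$ lies in $\operatorname{int}K_i$ for every $i$.
\item If there were two components, then $\Sigma_2\subset\overline{\Omega_M}\setminus\Sigma_1\subset\operatorname{int}K_1$ gives $K_2\subset\operatorname{int}K_1$.
\item At the same time $\Sigma_1\subset\overline{\Omega_M}\subset K_2$, which is impossible since $\Sigma_1=\partial K_1$.
\end{itemize}
Alternatively, you can apply the Tietze--Nakajima theorem to the closed, connected, locally convex set $\overline{\Omega_M}$. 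Your remark that the $s$-slices are convex does not by itself deliver connectedness.
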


\begin{proof}
Since $Ms^4$ is independent of $z$, one has
\[
 D_z^2V_M=D_z^2U.
\]
Thus \eqref{eq:3d-cap-phase} follows at once from
\eqref{eq:3d-exact-slice}.  The function $s^4$ and all its derivatives
of order at most two vanish at $s=0$.  Hence $V_M$, $DV_M$, and
$D^2V_M$ agree with $U$, $DU$, and $D^2U$, respectively, on the plane
$s=0$.  In particular,
\[
 V_M(p_*)=U(p_*)=c_*,
 \qquad
 \mathcal S_{V_M}(p_*)=\mathcal S_U(p_*).
\]
Also,
\[
 V_M(0,z)=U(0,z)=\frac{\alpha+\kappa}{2}(|z|^2-1).
\]
The section of $\{V_M<0\}$ by $s=0$ is therefore $B_1$.  Since
$|z_*|=\rho_*<1$, the point $p_*=(0,z_*)$ lies in the same component
of this section as $(0,0)$, and hence $p_*\in\Omega_M$.

We first describe the component $\Omega_M$.  For each fixed
$s\in[-\sigma,\sigma]$, the function $z\mapsto U(s,z)$ is uniformly
strictly convex.  By \eqref{eq:3d-compact-bounds}, its minimum over
$\overline{B_R}$ is attained in $B_R$.  Denote the unique minimizer by
$z(s)$.  It is characterized by
\[
 D_zU(s,z(s))=0.
\]
Since $D_z^2U\ge b_0I$, the implicit function theorem shows that
$z(s)$ is smooth.  The function
\[
 \mu(s):=U(s,z(s))=\min_{|z|\le R}U(s,z)
\]
is smooth as well.  We have $z(0)=0$ and
$\mu(0)=-(\alpha+\kappa)/2$.
Decreasing $\sigma$ once more, while retaining the notation
$\mathcal C$, we may assume that
\begin{equation}\label{eq:3d-mu-bounds}
 \frac{\alpha+\kappa}{4}\le -\mu(s)
 \le\frac{3(\alpha+\kappa)}{4}
 \qquad (|s|\le\sigma).
\end{equation}
The bounds in \eqref{eq:3d-compact-bounds} remain valid after this
restriction.  From now on, $\sigma$ is fixed before $M$ is chosen.

Set
\[
 C_U:=\max_{\overline{\mathcal C}}(-U)<\infty.
\]
If $(s,z)\in\mathcal C$ and $V_M(s,z)<0$, then
\[
 Ms^4<-U(s,z)\le C_U,
\]
and therefore
\begin{equation}\label{eq:3d-s-localization}
 |s|<\left(\frac{C_U}{M}\right)^{1/4}.
\end{equation}
In particular, if $M>C_U/\sigma^4$, then
\[
 V_M(\pm\sigma,z)\ge -C_U+M\sigma^4>0.
\]
On the other part of the artificial boundary,
\eqref{eq:3d-compact-bounds} gives
\[
 V_M(s,z)\ge U(s,z)\ge c_0
 \qquad (|z|=R).
\]
Thus the negative set stays away from $\partial\mathcal C$, and
\begin{equation}\label{eq:3d-relative-compactness}
 \overline{\Omega_M}\Subset\mathcal C.
\end{equation}

Define
\[
 h_M(s):=\mu(s)+Ms^4,
\]
and let
\[
 I_M=(s_-,s_+)
\]
be the component containing $0$ of the open set $\{h_M<0\}$.  For a
fixed $s$, put
\[
 D_s:=\{z\in B_R:U(s,z)<-Ms^4\}.
\]
The section $D_s$ is nonempty if and only if $h_M(s)<0$.  Whenever it
is nonempty, it is strictly convex and contains $z(s)$.  We claim that
\begin{equation}\label{eq:3d-section-description}
 \Omega_M
 =\bigcup_{s\in(s_-,s_+)}\bigl(\{s\}\times D_s\bigr).
\end{equation}
Indeed, the set on the right is connected.  A point $(s,z)$ in this
set can first be joined to $(s,z(s))$ by a segment contained in $D_s$;
it can then be joined to $(0,z(0))=(0,0)$ along the curve
$t\mapsto(t,z(t))$, because $h_M(t)<0$ for $t$ between $0$ and $s$.
Conversely, the projection onto the $s$-axis of any connected subset of
$\{V_M<0\}$ containing $(0,0)$ is a connected subset of $\{h_M<0\}$
containing $0$.  It must therefore lie in $I_M$.  This proves
\eqref{eq:3d-section-description}.

For large $M$, the preceding boundary estimates imply
$h_M(\pm\sigma)>0$.  Thus $s_-$ and $s_+$ belong to
$(-\sigma,\sigma)$ and satisfy
\[
 h_M(s_\pm)=0.
\]
Combining this identity with \eqref{eq:3d-mu-bounds}, we obtain
\begin{equation}\label{eq:3d-cap-scale}
 \left(\frac{\alpha+\kappa}{4M}\right)^{1/4}
 \le |s_\pm|
 \le
 \left(\frac{3(\alpha+\kappa)}{4M}\right)^{1/4}.
\end{equation}
In particular,
\[
 |s_\pm|\approx M^{-1/4},
\]
where \(\approx\) denotes two-sided comparison with constants independent
of \(M\).

At $s=s_\pm$, the open section $D_{s_\pm}$ is empty.  The corresponding
section of the closure is a single point:
\begin{equation}\label{eq:3d-cap-points}
 \overline{\Omega_M}\cap
 \bigl(\{s_\pm\}\times B_R\bigr)
 =\{P_\pm\},
 \qquad
 P_\pm:=(s_\pm,z(s_\pm)).
\end{equation}
To verify this assertion, the strong convexity in
\eqref{eq:3d-compact-bounds} gives
\begin{equation}\label{eq:3d-strong-convexity}
 U(s,z)-\mu(s)
 \ge\frac{b_0}{2}|z-z(s)|^2.
\end{equation}
On the other hand, if $(s,z)\in\overline{\Omega_M}$, then
\[
 U(s,z)-\mu(s)
 \le -Ms^4-\mu(s)=-h_M(s).
\]
Consequently,
\begin{equation}\label{eq:3d-section-collapse}
 |z-z(s)|^2
 \le\frac{2}{b_0}\bigl(-h_M(s)\bigr).
\end{equation}
The right-hand side tends to zero as $s\to s_\pm$ from inside $I_M$,
which proves \eqref{eq:3d-cap-points} and gives uniform collapse of the
sections.

We next prove that the two caps are smooth.  Differentiating
$\mu(s)=U(s,z(s))$ and using $D_zU(s,z(s))=0$, we find
\[
 \mu'(s)=U_s(s,z(s)).
\]
In particular, $\mu'$ is bounded independently of $M$.  By
\eqref{eq:3d-cap-scale},
\[
 4M|s_\pm|^3
 \ge 4\left(\frac{\alpha+\kappa}{4}\right)^{3/4}M^{1/4}.
\]
It follows, after increasing $M$ if necessary, that
\begin{equation}\label{eq:3d-cap-derivative}
 h_M'(s_-)<0<h_M'(s_+).
\end{equation}
At $P_\pm$ one has
\[
 D_zV_M(P_\pm)=0,
 \qquad
 (V_M)_s(P_\pm)
 =\mu'(s_\pm)+4Ms_\pm^3
 =h_M'(s_\pm)\ne0.
\]
Thus $0$ is a regular value of $V_M$ at the caps.  More precisely, the
implicit function theorem writes the zero level near
$P_\pm$ as a graph $s=\psi_\pm(z)$.  At $z=z(s_\pm)$,
\[
 D\psi_\pm=0,
 \qquad
 D^2\psi_\pm
 =-\frac{D_z^2U(P_\pm)}{h_M'(s_\pm)}.
\]
Thus the upper cap is a smooth strictly concave graph and the lower cap
is a smooth strictly convex graph.  The collapse in
\eqref{eq:3d-section-collapse} therefore does not produce a cusp.

At any other point of $\partial\Omega_M$, the $s$-coordinate belongs
to $(s_-,s_+)$.  If $D_zV_M=D_zU$ vanished at such a point, strict
convexity would give $z=z(s)$ and hence
\[
 V_M(s,z)=h_M(s)<0,
\]
which is impossible on the boundary.  Therefore $D_zV_M\ne0$ on the
side boundary.  We have proved that $\partial\Omega_M$ is a smooth
embedded closed surface.

We also record why this surface is connected.  For
$s\in(s_-,s_+)$ and $\omega\in\mathbb S^1$, consider
\[
 r\longmapsto U(s,z(s)+r\omega).
\]
Its derivative vanishes at $r=0$, while its second derivative is at
least $b_0$.  It is therefore strictly increasing for $r>0$ as long as
the ray remains in $B_R$.  Since $U(s,z(s))<-Ms^4$ and $U\ge c_0$ on
$\partial B_R$, the ray meets the level $U=-Ms^4$ exactly once.  Hence
\[
 \partial D_s
 =\{z(s)+r_M(s,\omega)\omega:\omega\in\mathbb S^1\},
\]
where $r_M(s,\omega)>0$ is smooth.  Estimate
\eqref{eq:3d-section-collapse} yields
\[
 \sup_{\omega\in\mathbb S^1}r_M(s,\omega)\longrightarrow0
 \qquad\text{as }s\to s_\pm.
\]
Thus the family of side curves closes at $P_-$ and $P_+$, and
$\partial\Omega_M$ is connected.

It remains to prove strict convexity.  On the side boundary set
\[
 B=D_z^2U,\qquad q=D_zU,
 \qquad d=q^{\mathsf T}B^{-1}q,
\]
and
\[
 a_0=U_{ss}-(D_zU_s)^{\mathsf T}B^{-1}D_zU_s,
 \qquad
 a_1=U_s-q^{\mathsf T}B^{-1}D_zU_s.
\]
Here $B\ge b_0I$ and $q\ne0$, so $d>0$.  Applying
\cref{lem:3d-Schur} to $V_M$ gives
\begin{equation}\label{eq:3d-cap-Schur}
 \mathcal S_M
 =a_0+12Ms^2+\frac{(a_1+4Ms^3)^2}{d}.
\end{equation}
All quantities involving only $U$ are controlled on the fixed cylinder.
In particular, there are constants $C_1,A_1>0$, independent of $M$,
such that
\begin{equation}\label{eq:3d-a-bounds}
 a_0\ge-C_1,
 \qquad
 |a_1|\le A_1
 \quad\text{on }\overline{\mathcal C}.
\end{equation}
Choose $L>0$, independently of $M$, so that
\[
 12L^2>C_1+1.
\]
If a side boundary point satisfies $|s|\ge LM^{-1/2}$, then the last
term in \eqref{eq:3d-cap-Schur} is nonnegative and
\[
 \mathcal S_M
 \ge-C_1+12Ms^2
 \ge-C_1+12L^2>1.
\]

It remains to consider side boundary points for which
\[
 |s|\le LM^{-1/2}.
\]
At such a point, $V_M(s,z)=0$, and hence
\[
 U(s,z)=-Ms^4.
\]
Let $K_0=\|U_s\|_{L^\infty(\overline{\mathcal C})}$.  Using the central
identity for $U$, we obtain
\begin{align*}
 \frac{\alpha+\kappa}{2}\bigl||z|^2-1\bigr|
 &=|U(0,z)|\\
 &\le |U(0,z)-U(s,z)|+|U(s,z)|\\
 &\le K_0|s|+Ms^4\\
 &\le K_0LM^{-1/2}+L^4M^{-1}.
\end{align*}
Thus these boundary points converge uniformly, as $M\to\infty$, to the
central circle $\{s=0,\ |z|=1\}$.

Put
\[
 \mathfrak s_*:=\frac{(\kappa-\alpha)\kappa^2}{2}\Psi_*>0.
\]
On the central circle, $d=\alpha+\kappa$ and
$\mathcal S_U=\mathfrak s_*$.  By continuity, for all sufficiently
large $M$, the points under consideration satisfy
\begin{equation}\label{eq:3d-central-margin}
 d\ge d_0>0,
 \qquad
 \mathcal S_U:=a_0+\frac{a_1^2}{d}
 \ge\frac34\mathfrak s_*.
\end{equation}
Moreover,
\begin{align*}
 \mathcal S_M-\mathcal S_U
 &=12Ms^2+
   \frac{8Ms^3a_1+16M^2s^6}{d}\\
 &\ge-\frac{8A_1}{d_0}M|s|^3\\
 &\ge-\frac{8A_1L^3}{d_0}M^{-1/2}.
\end{align*}
After increasing $M$ once more, the last quantity is bounded below by
$-\mathfrak s_*/4$.  Therefore
\[
 \mathcal S_M\ge\frac12\mathfrak s_*>0
\]
throughout the central layer.  Together with the preceding estimate,
this proves positivity of the tangential Hessian at every point of the
side boundary.

At a cap point $P_\pm$, the gradient of $V_M$ is parallel to the
$s$-axis.  If $v=(\tau,\xi)$ is tangent to $\partial\Omega_M$ at
$P_\pm$, then \eqref{eq:3d-cap-derivative} gives $\tau=0$.  Therefore
\[
 D^2V_M(P_\pm)[v,v]
 =\xi^{\mathsf T}D_z^2U(P_\pm)\xi
 \ge b_0|\xi|^2>0
\]
for every nonzero tangent vector.  Thus the tangential Hessian is
positive definite at the caps as well.

Since $\Omega_M=\{V_M<0\}$ locally along its boundary, the outward unit
normal is
\[
 \nu=\frac{DV_M}{|DV_M|}.
\]
For a tangent vector $v$,
\[
 \mathrm{II}_{\nu}(v,v)
 =\frac{D^2V_M[v,v]}{|DV_M|}.
\]
The second fundamental form of $\partial\Omega_M$ is therefore positive
definite everywhere.  The Hadamard theorem for compact locally strictly
convex hypersurfaces now shows that $\partial\Omega_M$ bounds a strictly
convex body.  This body is $\overline{\Omega_M}$, because
$\partial\Omega_M$ is the boundary of the bounded component
$\Omega_M$.

Finally, after $M$ has been fixed, compactness of $\partial\Omega_M$
and continuity of the second fundamental form give
\[
 \kappa_M
 :=\min_{p\in\partial\Omega_M}
   \min_{\substack{v\in T_p\partial\Omega_M\\ |v|=1}}
   \mathrm{II}_{\nu}(v,v)>0.
\]
This is the asserted uniform convexity.
\end{proof}

The geometry of the preceding lemma is summarized in
\cref{fig:omega-M}.

\begin{figure}[!htbp]
\centering
\resizebox{0.92\textwidth}{!}{%
\begin{tikzpicture}[x=1cm,y=1cm]

\begin{scope}[shift={(4.05,3.50)}]
  \node[panel title] at (0,2.72)
    {(a) The component $\Omega_M\Subset\mathcal C$};

  \draw[artificial] (-3.55,2.05) -- (3.55,2.05);
  \draw[artificial] (-3.55,-2.05) -- (3.55,-2.05);
  \draw[artificial] (-3.55,0) ellipse [x radius=0.36,y radius=2.05];
  \draw[artificial] ( 3.55,0) ellipse [x radius=0.36,y radius=2.05];
  \node[text=CylinderGray,anchor=south west] at (-3.45,2.10)
    {$\mathcal C=(-\sigma,\sigma)\times B_R$};

  \path[fill=OmegaFill]
    (-2.72,0.10)
      .. controls (-2.55,0.92) and (-1.30,1.48) .. (0,1.58)
      .. controls (1.25,1.48) and (2.52,0.93) .. (2.92,-0.10)
      .. controls (2.50,-0.88) and (1.20,-1.38) .. (0,-1.46)
      .. controls (-1.28,-1.38) and (-2.53,-0.86) .. (-2.72,0.10)
    -- cycle;
  \draw[boundary]
    (-2.72,0.10)
      .. controls (-2.55,0.92) and (-1.30,1.48) .. (0,1.58)
      .. controls (1.25,1.48) and (2.52,0.93) .. (2.92,-0.10)
      .. controls (2.50,-0.88) and (1.20,-1.38) .. (0,-1.46)
      .. controls (-1.28,-1.38) and (-2.53,-0.86) .. (-2.72,0.10);

  \draw[hidden section] (-1.32,0.12)
    ellipse [x radius=0.27,y radius=1.27];
  \path[fill=OmegaBlue!9,draw=none] (0,0)
    ellipse [x radius=0.34,y radius=1.52];
  \draw[section] (0,0)
    ellipse [x radius=0.34,y radius=1.52];
  \draw[hidden section] (1.48,-0.06)
    ellipse [x radius=0.25,y radius=1.15];

  \draw[CylinderGray,densely dotted,line width=0.65pt]
    (-2.72,0.10) .. controls (-1.60,0.17) and (-0.58,0.08) .. (0,0)
    .. controls (0.82,-0.04) and (1.85,-0.09) .. (2.92,-0.10);

  \draw[axis] (-3.95,0) -- (4.03,0) node[right] {$s$};
  \draw[CylinderGray,densely dotted] (-2.72,0) -- (-2.72,0.10);
  \draw[CylinderGray,densely dotted] ( 2.92,0) -- ( 2.92,-0.10);
  \fill[OmegaBlue!88!black] (-2.72,0.10) circle (1.2pt);
  \fill[OmegaBlue!88!black] ( 2.92,-0.10) circle (1.2pt);
  \node[above left=-1pt] at (-2.72,0.10) {$P_-$};
  \node[above right=-1pt] at (2.92,-0.10) {$P_+$};
  \node[below=6pt] at (-2.72,0) {$s_-$};
  \node[below=6pt] at ( 2.92,0) {$s_+$};
  \node[below=2pt] at (0,0) {$0$};

  \node[text=OmegaBlue!75!black] at (0.72,0.58) {$\Omega_M$};
  \node[anchor=south west] (bdry) at (1.76,1.72) {$V_M=0$};
  \draw[note arrow] (bdry.south) -- (2.12,0.83);

  \node[anchor=south west,fill=white,fill opacity=0.78,text opacity=1,
        inner sep=1pt] (d0) at (0.38,1.30) {$D_0=B_1$};
  \draw[note arrow] (d0.west) -- (0.08,1.08);
  \node[anchor=north] (ds) at (-1.75,-1.68) {$D_s$};
  \draw[note arrow] (ds.north east) -- (-1.33,-0.88);

\end{scope}

\begin{scope}[shift={(13.25,3.50)}]
  \node[panel title] at (0,2.72)
    {(b) A representative meridian view};

  \draw[artificial] (-3.45,-2.05) rectangle (3.45,2.05);
  \node[text=CylinderGray,anchor=south east] at (3.38,2.08)
    {$|s|<\sigma$, $|z|<R$};

  \path[fill=OmegaFill,boundary]
    (-2.72,0)
      .. controls (-2.54,0.88) and (-1.35,1.43) .. (0,1.49)
      .. controls (1.23,1.44) and (2.54,0.88) .. (2.92,0)
      .. controls (2.52,-0.84) and (1.25,-1.34) .. (0,-1.39)
      .. controls (-1.30,-1.34) and (-2.55,-0.82) .. (-2.72,0)
    -- cycle;

  \draw[axis] (-3.82,0) -- (3.86,0) node[right] {$s$};
  \draw[axis] (0,-1.82) -- (0,2.27) node[above] {$\zeta$};

  \draw[section] (0,-1.39) -- (0,1.49);
  \draw[hidden section] (-1.34,-1.15) -- (-1.34,1.20);
  \node[anchor=west] at (-1.28,1.02) {$D_s\cap\Pi$};
  \node[anchor=north east] at (-0.10,-1.45) {$D_0\cap\Pi$};

  \fill[OmegaBlue!88!black] (-2.72,0) circle (1.2pt);
  \fill[OmegaBlue!88!black] ( 2.92,0) circle (1.2pt);
  \draw[CylinderGray,line width=0.55pt] (-2.72,-0.38) -- (-2.72,0.38);
  \draw[CylinderGray,line width=0.55pt] ( 2.92,-0.38) -- ( 2.92,0.38);
  \node[below=3pt,fill=white,inner sep=0.5pt] at (-2.72,0) {$s_-$};
  \node[below=3pt,fill=white,inner sep=0.5pt] at ( 2.92,0) {$s_+$};
  \node[below left=2pt] at (0,0) {$0$};

  \node[anchor=north west,align=left,font=\scriptsize,
        fill=white,fill opacity=0.76,text opacity=1,inner sep=1pt]
    (regular) at (0.42,1.36)
    {$D_zV_M=0$,\quad $(V_M)_s\ne0$};
  \draw[note arrow] (regular.east) -- (2.84,0.16);

  \node[anchor=east,align=right,font=\scriptsize,
        fill=white,fill opacity=0.76,text opacity=1,inner sep=1pt]
    (cap) at (2.25,-0.86)
    {$s=s_+-Q_+|\zeta-\zeta_+|^2$\\
     ${}+O(|\zeta-\zeta_+|^3)$};
  \draw[note arrow] (cap.east) -- (2.78,-0.22);

  \node[anchor=north,align=center,font=\scriptsize] at (0,-2.23)
    {$D_s\ne\varnothing\iff h_M(s)=\mu(s)+Ms^4<0$,\\[-1pt]
     $|s_\pm|\approx M^{-1/4}$};
\end{scope}

\end{tikzpicture}%
}
\caption{Schematic geometry of $\Omega_M$.  Every nonempty $z$-section
is a strictly convex disk and collapses smoothly at $s=s_\pm$.  No
rotational symmetry is assumed.}
\label{fig:omega-M}
\end{figure}
\FloatBarrier

\subsection{Anisotropic rescaling and comparison}

Fix $M$ as in \cref{lem:3d-cap}.  For $0<\e\ll1$, set
\begin{equation}\label{eq:3d-stretch}
 W_\e(X,z)=V_M(\e X,z),\qquad
 \Omega_\e=\{(X,z):(\e X,z)\in\Omega_M\}.
\end{equation}
The domain $\Omega_\e$ is smooth, bounded, and uniformly convex, and
\[
 W_\e=0\quad\text{on }\partial\Omega_\e,\qquad
 W_\e<0\quad\text{in }\Omega_\e.
\]

\begin{lemma}[Phase residual]\label{lem:3d-residual}
Uniformly on $\overline{\Omega_\e}$,
\[
 \F_3(D^2W_\e)=\Theta+O(\e^2).
\]
\end{lemma}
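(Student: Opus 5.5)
We compute the Hessian of $W_\e$ in the variables $(X,z)$ and treat it as a perturbation of the block-diagonal matrix $\operatorname{diag}(0,D_z^2V_M)$. By the chain rule, evaluated at $(s,z)=(\e X,z)\in\Omega_M$,
\[
 D^2W_\e(X,z)=
 \begin{pmatrix}
  \e^2 (V_M)_{ss} & \e\,(D_z(V_M)_s)^{\mathsf T}\\
  \e\,D_z(V_M)_s & D_z^2V_M
 \end{pmatrix}
 =:\begin{pmatrix}
  \e^2 a & \e b^{\mathsf T}\\
  \e b & B
 \end{pmatrix}.
\]
Since $\overline{\Omega_M}\Subset\mathcal C$ by \cref{lem:3d-cap} and $V_M$ is smooth on a neighborhood of $\overline{\mathcal C}$, the quantities $a$, $b$ and $B$ are bounded uniformly on $\overline{\Omega_\e}$, independently of $\e$. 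Moreover, \eqref{eq:3d-compact-bounds} gives $B\ge b_0I$. Also $\F_2(B)=\Theta$ by \eqref{eq:3d-cap-phase}.

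The plan is to compute $\F_3$ through the complex determinant. For symmetric $A$ we have $\F_n(A)=\arg\det(I+\ii A)$, continuously determined. Write $N=I_2+\ii B$, which is invertible because $B$ is real symmetric. The Schur complement formula gives
\[
 \det(I_3+\ii D^2W_\e)=\det N\cdot\bigl(1+\ii\e^2a+\e^2 b^{\mathsf T}N^{-1}b\bigr).
\]
Here we used $(\ii\e b)^{\mathsf T}N^{-1}(\ii\e b)=-\e^2b^{\mathsf T}N^{-1}b$, which enters with a minus sign. The matrix $N^{-1}=(I+B^2)^{-1}(I-\ii B)$ has norm at most $1$, and $a$, $b$ are bounded. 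Hence the scalar factor equals $1+O(\e^2)$ uniformly, and its argument, taken near $0$, is $O(\e^2)$.

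It remains to check that the branches add correctly, so that
\[
 \F_3(D^2W_\e)=\F_2(B)+\arg\bigl(1+O(\e^2)\bigr)=\Theta+O(\e^2).
\]
This is the only delicate point, and it is mild. The identity $\F_3=\sum\arctan\lambda_j$ equals the continuous argument of $\det(I+\ii A)$ along the path $t\mapsto tA$ from $t=0$. Consider the path in which the off-diagonal and corner entries are scaled by $t\in[0,1]$. At $t=0$ the matrix is block diagonal, so $\F_3=\arctan 0+\F_2(B)$. Along the path, the scalar factor stays within $O(\e^2)$ of $1$ and never winds around the origin. Therefore the continuous argument changes by $O(\e^2)$ in total.

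A simpler alternative avoids branch issues altogether. By eigenvalue perturbation (Weyl's inequality and interlacing), $D^2W_\e$ has two eigenvalues within $O(\e^2)$ of those of $B$ and one eigenvalue of size $O(\e^2)$. The last claim follows because $\det D^2W_\e=\e^2\det B\,(a-b^{\mathsf T}B^{-1}b)$ and $B\ge b_0 I$. Then $\arctan$ is $1$-Lipschitz, which gives the estimate directly. I expect the main step to be this bookkeeping showing that the eigenvalue near $0$ is $O(\e^2)$ and not merely $O(\e)$. The Schur complement makes this transparent: the first-order $\e$ terms cancel because the off-diagonal block enters quadratically.
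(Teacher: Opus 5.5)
Your proof is correct, but it takes a different route from the paper's. The paper never factors the determinant. It notes that conjugation by $S=\operatorname{diag}(-1,I_2)$ maps $G_\e=D^2W_\e$ to $G_{-\e}$. Hence $\e\mapsto\F_3(G_\e)$ is an even smooth function, with value $\F_2(B)=\Theta$ at $\e=0$. The first-order term therefore vanishes, and the uniform $O(\e^2)$ bound follows from Taylor's formula, because $\F_3$ is smooth on $\operatorname{Sym}(3)$ and $(a,b,B)$ ranges over a compact set.

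Your Schur-complement factorization
$\det(I+\ii G_\e)=\det(I+\ii B)\,\bigl(1+\ii\e^2a+\e^2b^{\mathsf T}N^{-1}b\bigr)$
makes the same cancellation of the $O(\e)$ terms explicit, at the cost of a branch argument. That argument works as you set it up. $N$ is fixed along your path, and $\F_3$ is continuous and agrees with $\arg\det(I+\ii\,\cdot\,)$ modulo $2\pi$. So the $2\pi\mathbb Z$-valued discrepancy is constant in $t$ and vanishes at the block-diagonal endpoint.

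Your route has two advantages. It only needs continuity of $\F_3$, and it yields an explicit leading coefficient, $\F_3(G_\e)-\Theta=\e^2\bigl(a-b^{\mathsf T}B(I+B^2)^{-1}b\bigr)+O(\e^4)$. The paper's symmetry argument is shorter and avoids branches entirely.

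One small correction to your eigenvalue alternative: Weyl's inequality gives only $O(\e)$ for the two large eigenvalues. Write $\mu_1\le\mu_2\le\mu_3$ for the eigenvalues of $G_\e$ and $\beta_1\le\beta_2$ for those of $B$. Interlacing gives $\mu_1\le\beta_1\le\mu_2\le\beta_2\le\mu_3$, so both differences $\mu_2-\beta_1$ and $\mu_3-\beta_2$ are nonnegative. The trace identity gives $(\mu_2-\beta_1)+(\mu_3-\beta_2)=\e^2a-\mu_1=O(\e^2)$. Together these show each difference is $O(\e^2)$. This bookkeeping, not only the determinant bound for $\mu_1$, is what closes that version.
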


\begin{proof}
At a fixed point $(s,z)\in\overline{\Omega_M}$ in the unscaled
variables, write
\[
\begin{gathered}
 G_\e(s,z)=
 \begin{pmatrix}
  \e^2a&\e b^{\mathsf T}\\
  \e b&B
 \end{pmatrix},\\
 a=(V_M)_{ss},\qquad b=D_z(V_M)_s,\qquad B=D_z^2V_M>0.
\end{gathered}
\]
For $s=\e X$, one has $D^2W_\e(X,z)=G_\e(s,z)$.
At $\e=0$, this matrix is $\operatorname{diag}(0,B)$ and has phase
$\Theta$.  If $S=\operatorname{diag}(-1,I_2)$, then
$G_{-\e}=S G_\e S$.  The phase is invariant under orthogonal
conjugation, so $\F_3(G_\e(s,z))$ is an even smooth function of $\e$.
The operator $\F_3$ is smooth on $\operatorname{Sym}(3)$, and all the
coefficients above vary on the fixed compact set
$\overline{\Omega_M}$.  Taylor's formula therefore gives the uniform
$O(\e^2)$ estimate.
\end{proof}

\begin{proposition}[Comparison with the Dirichlet solution]
\label{prop:3d-barriers}
Let $u_\e$ be the solution of
\begin{equation}\label{eq:3d-Dirichlet-epsilon}
 \begin{cases}
  \F_3(D^2u_\e)=\Theta&\text{in }\Omega_\e,\\
  u_\e=0&\text{on }\partial\Omega_\e.
 \end{cases}
\end{equation}
There is a constant $C$, independent of $\e$, such that
\begin{equation}\label{eq:3d-C0-close}
 \|u_\e-W_\e\|_{L^\infty(\Omega_\e)}\le C\e^2.
\end{equation}
\end{proposition}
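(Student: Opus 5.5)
The plan is a barrier argument: correct $W_\e$ by a quadratic term of size $\e^2$ in the $z$ variables only, and then apply the comparison principle for $\F_3$.

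\emph{Step 1: a uniform bound on the Hessians.} Recall from the proof of \cref{lem:3d-residual} that $D^2W_\e(X,z)=G_\e(s,z)$ with $s=\e X$. The entries of $G_\e$ are bounded uniformly in $\e\in(0,1]$ by a constant $\Lambda$, because $V_M$ is a fixed smooth function on $\overline{\Omega_M}$. For every symmetric matrix $A$, one has $D\F_3(A)=(I+A^2)^{-1}$ in the sense of the derivative of $\sum_j\arctan\lambda_j$. Hence, for all matrices of norm at most $\Lambda+1$, the derivative satisfies $D\F_3(A)\ge c_0 I$ with $c_0=(1+(\Lambda+1)^2)^{-1}>0$.

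\emph{Step 2: the barriers.} Fix $R$ as in \eqref{eq:3d-compact-bounds}, so that $|z|<R$ on $\Omega_\e$. Set
\[
 v_\pm=W_\e\mp t\e^2\bigl(|z|^2-R^2\bigr),
\]
where $t>0$ will be chosen independently of $\e$. Since $W_\e=0$ on $\partial\Omega_\e$ and $|z|^2-R^2<0$ there, we get $v_-\le0\le v_+$ on $\partial\Omega_\e$. Also
\[
 D^2v_\pm=D^2W_\e\mp2t\e^2\operatorname{diag}(0,I_2).
\]
For $\e$ small (depending on $t$), the segment between $D^2W_\e$ and $D^2v_\pm$ stays in the ball of radius $\Lambda+1$. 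The mean value theorem, Step 1 and \cref{lem:3d-residual} then give
\[
 \F_3(D^2v_-)\ge\Theta-C_0\e^2+4c_0t\e^2,\qquad
 \F_3(D^2v_+)\le\Theta+C_0\e^2-4c_0t\e^2 .
\]
Here $4c_0t\e^2$ bounds from below the trace of $D\F_3$ against $2t\e^2\operatorname{diag}(0,I_2)$, since the $z$-block contributes at least $2c_0$ per unit. Choosing $t=C_0/(2c_0)$ makes $v_-$ a strict classical subsolution and $v_+$ a strict classical supersolution, both smooth on $\overline{\Omega_\e}$.

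\emph{Step 3: comparison.} The solution $u_\e$ is smooth inside and continuous on $\overline{\Omega_\e}$. Suppose $v_--u_\e$ had a positive maximum; since $v_--u_\e\le0$ on $\partial\Omega_\e$, the maximum would be at an interior point. There $D^2v_-\le D^2u_\e$, so ellipticity ($D\F_3>0$ everywhere) gives $\F_3(D^2v_-)\le\F_3(D^2u_\e)=\Theta$. This contradicts the strict subsolution inequality. Thus $v_-\le u_\e$, and symmetrically $u_\e\le v_+$. Consequently
\[
 |u_\e-W_\e|\le t\e^2R^2,
\]
which is \eqref{eq:3d-C0-close} with $C=tR^2$.

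The only delicate point is the uniformity in Step 1. The domain $\Omega_\e$ is stretched in $X$, so a correction involving $X$ (for instance $X^2$) would not be $O(\e^2)$ in sup norm. This is exactly why the correction is taken in $z$ alone: there $|z|<R$ stays bounded, and the $z$-directions already carry the uniform ellipticity.
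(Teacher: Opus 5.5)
Your proposal is correct and follows essentially the same route as the paper: a barrier $W_\e\mp t\e^2(|z|^2-R^2)$ that corrects only in the bounded $z$-variables, a lower bound on the linearization $\tr\bigl((I+G^2)^{-1}\operatorname{diag}(0,I_2)\bigr)$, and the interior-maximum comparison. The differences are cosmetic. The paper uses $R_0^2-|z|^2$ with $R_0>R$, and gets its ellipticity constant from $\min\tr\bigl((I+B^2)^{-1}\bigr)$ at $\e=0$ plus continuity. You instead use a uniform operator-norm bound on $D^2W_\e$ to get $D\F_3\ge c_0I$.
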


\begin{proof}
Existence and uniqueness follow from \cite[Theorem~1.2]{luDCDS}.
The solution belongs to
$C^{0,1}(\overline{\Omega_\e})$ and is smooth in $\Omega_\e$.

Choose $R_0>R$ and set
\[
 \psi(X,z)=R_0^2-|z|^2>0
 \quad\text{on }\overline{\Omega_\e}.
\]
Put $P=\operatorname{diag}(0,I_2)$.  The linearization of the phase
operator is
\[
 D\F_3\big|_G[N]
 =\tr\bigl((I+G^2)^{-1}N\bigr).
\]
Since $B=D_z^2V_M$ ranges over a compact subset of the positive definite
cone,
\[
 \lambda_0:=\min_{\overline{\Omega_M}}
 \tr\bigl((I+B^2)^{-1}\bigr)>0.
\]
Let the residual in \cref{lem:3d-residual} be bounded by
$C_{\mathrm{res}}\e^2$, and fix
$A>2C_{\mathrm{res}}/\lambda_0$.  By continuity, after decreasing
$\e$ we have
\[
 D\F_3\big|_{D^2W_\e+tP}[P]\ge\frac{\lambda_0}{2}
 \qquad (|t|\le2A\e^2)
\]
uniformly on $\overline{\Omega_\e}$.  Define
\[
 \underline W_\e=W_\e-A\e^2\psi,\qquad
 \overline W_\e=W_\e+A\e^2\psi.
\]
Since $D^2\psi=-2P$, the integral form of the linearization gives
\[
 \F_3(D^2\underline W_\e)
 \ge\Theta+(A\lambda_0-C_{\mathrm{res}})\e^2>\Theta,
\]
\[
 \F_3(D^2\overline W_\e)
 \le\Theta-(A\lambda_0-C_{\mathrm{res}})\e^2<\Theta.
\]
On $\partial\Omega_\e$,
\[
 \underline W_\e\le0=u_\e\le\overline W_\e.
\]
Indeed, if $\underline W_\e-u_\e$ had a positive maximum at an interior
point, then
$D^2\underline W_\e\le D^2u_\e$ there.  The ellipticity of $\F_3$
would give
$\F_3(D^2\underline W_\e)\le\F_3(D^2u_\e)=\Theta$, a contradiction.
The same argument applied to $u_\e-\overline W_\e$ gives
\[
 \underline W_\e\le u_\e\le\overline W_\e
\quad\text{in }\Omega_\e.
\]
Since $0<\psi\le R_0^2$, estimate \eqref{eq:3d-C0-close} follows.
\end{proof}

\subsection{Proof of the three-dimensional theorem}

\begin{lemma}[A midpoint criterion]\label{lem:3d-midpoint}
Let $f\in C^3$ near $p$, assume $Df(p)\ne0$, and suppose that
\[
 Df(p)\cdot\tau=0,\qquad
 D^2f(p)[\tau,\tau]=-\lambda<0
\]
for a unit vector $\tau$.  Then there are $h>0$, $\ell<f(p)$, and
$\Delta>0$ such that
\[
 f(p\pm h\tau)\le\ell-4\Delta,\qquad
 f(p)\ge\ell+4\Delta.
\]
\end{lemma}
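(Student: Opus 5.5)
The plan is a second-order Taylor expansion of $f$ along the line through $p$ in the direction $\tau$. Set $g(t)=f(p+t\tau)$. Since $f\in C^3$ near $p$, $g$ is $C^3$ on a small interval $|t|\le t_0$. By hypothesis $g(0)=f(p)$, $g'(0)=Df(p)\cdot\tau=0$ and $g''(0)=-\lambda$. Let $K$ be a bound for $|g'''|$ on $[-t_0,t_0]$, coming from the $C^3$ norm of $f$ on a small closed ball around $p$. Taylor's formula with remainder then gives
\[
 g(\pm h)\le f(p)-\frac{\lambda}{2}h^2+\frac{K}{6}h^3
\]
for $0<h\le t_0$.

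Next choose $h\in(0,t_0]$ small enough that $\frac{K}{6}h^3\le\frac{\lambda}{4}h^2$, that is, $h\le 3\lambda/(2K)$ (any $h\le t_0$ works if $K=0$). With this choice,
\[
 f(p\pm h\tau)\le f(p)-\frac{\lambda}{4}h^2.
\]
Now set $\Delta=\lambda h^2/32$, so that the gap $\lambda h^2/4$ equals $8\Delta$. Put $\ell=f(p)-4\Delta$. Then $\ell<f(p)$ and $f(p)=\ell+4\Delta$, which gives the second inequality with equality. The first inequality follows from
\[
 f(p\pm h\tau)\le f(p)-8\Delta=\ell-4\Delta .
\]

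There is no genuine obstacle here. The hypothesis $Df(p)\ne0$ is not needed for the inequalities themselves; it only guarantees that $\tau$ is tangent to a regular level set, which is how the lemma will be used. The one point requiring care is the uniform remainder bound. It needs the segment $p+t\tau$, $|t|\le t_0$, to stay inside the neighborhood where $f$ is $C^3$, and $t_0$ is chosen at the outset to ensure this.

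For the later application, the margin $4\Delta$ on both sides is the useful part of the statement. If $\|u-f\|_{L^\infty}<\Delta$ (for instance, via \cref{prop:3d-barriers}), then the same midpoint defect persists for $u$ at the level $\ell$. In that case $p\pm h\tau$ lie in $\{u<\ell\}$ while the midpoint $p$ does not, so $\{u<\ell\}$ is nonconvex.
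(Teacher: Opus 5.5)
Your proof is correct and is essentially the paper's argument: a Taylor expansion along $\tau$, with $h$ small enough that $f(p\pm h\tau)\le f(p)-\lambda h^2/4$, followed by the same choices $\Delta=\lambda h^2/32$ and $\ell=f(p)-4\Delta=f(p)-\lambda h^2/8$. The only difference is that your explicit third-derivative bound $K$ makes the paper's $O(h^3)$ remainder quantitative.
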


\begin{proof}
Taylor's formula gives
\[
 f(p\pm h\tau)
 =f(p)-\frac{\lambda}{2}h^2+O(h^3).
\]
For small $h$, the right-hand side is at most
$f(p)-\lambda h^2/4$.  Taking
\[
 \ell=f(p)-\frac{\lambda h^2}{8},\qquad
 \Delta=\frac{\lambda h^2}{32}
\]
proves the assertion.
\end{proof}

\begin{proof}[Proof of \cref{thm:three-dimensional}]
By \cref{prop:3d-seed,lem:3d-cap}, the level of $V_M$ through $p_*$
has a negative tangential direction, while the component $\Omega_M$ in
\eqref{eq:3d-Omega-M} is smooth and uniformly convex.  Apply
\cref{lem:3d-midpoint} to obtain
\[
 p_\pm=p_*\pm h\tau,\qquad \ell<0,\qquad \Delta>0,
\]
with all three points in $\Omega_M$, such that
\begin{equation}\label{eq:3d-model-gap}
 V_M(p_\pm)\le\ell-4\Delta,\qquad
 V_M(p_*)\ge\ell+4\Delta.
\end{equation}

The linear map
\[
 T_\e(s,z)=\left(\frac{s}{\e},z\right)
\]
preserves midpoints.  Put
\[
 q_\pm=T_\e(p_\pm),\qquad q_*=T_\e(p_*).
\]
Then $q_*=(q_++q_-)/2$ and
$W_\e(q)=V_M(T_\e^{-1}q)$.  By
\cref{prop:3d-barriers}, for small $\e$ the difference
$|u_\e-W_\e|$ at these three points is less than $\Delta$.  Hence
\[
 u_\e(q_\pm)<\ell,\qquad u_\e(q_*)>\ell.
\]
Thus $\{u_\e<\ell\}$ is not convex.

The strict inequalities remain valid throughout the interval
\[
 I_\e=
 \bigl(\max\{u_\e(q_+),u_\e(q_-)\},u_\e(q_*)\bigr).
\]
The maximum principle gives $u_\e<0$ in $\Omega_\e$, so
$I_\e\subset(-\infty,0)$.  Sard's theorem provides a regular value in
$I_\e$.
Renaming
$\Omega_\e$ and $u_\e$ as $\Omega_\Theta$ and $u_\Theta$ completes the
proof.
\end{proof}

\begin{figure}[t]
\centering
\begin{tikzpicture}[
  x=1cm,y=1cm,
  domain boundary/.style={
    draw=CylinderGray,line width=0.75pt
  },
  level boundary/.style={
    draw=OmegaBlue!88!black,line width=1.1pt
  },
  midpoint line/.style={
    draw=CylinderGray,densely dashed,line width=0.7pt
  },
  point in/.style={
    circle,fill=OmegaBlue!88!black,inner sep=1.45pt
  },
  point out/.style={
    circle,draw=black,fill=white,
    line width=0.7pt,inner sep=1.25pt
  },
  label arrow/.style={
    -{Latex[length=1.7mm]},line width=0.55pt
  },
  every node/.style={font=\small}
]

  \path[fill=black!2,domain boundary]
    (0,0) ellipse [x radius=4.25,y radius=2.55];

  \node[anchor=north east,text=CylinderGray]
    at (3.35,2.03)
    {$\Omega_\varepsilon\cap\Pi$};

  \node[anchor=south west,text=CylinderGray]
    at (-3.82,1.92) {$\Pi$};

  \path[fill=OmegaFill,level boundary]
    (-3.10,0.12)
      .. controls (-3.00,1.22) and (-2.18,1.78)
      .. (-1.38,1.58)
      .. controls (-0.79,1.42) and (-0.58,-0.42)
      .. (0,-0.42)
      .. controls (0.58,-0.42) and (0.79,1.42)
      .. (1.38,1.58)
      .. controls (2.18,1.78) and (3.00,1.22)
      .. (3.10,0.12)
      .. controls (3.16,-1.18) and (1.68,-1.92)
      .. (0,-2.02)
      .. controls (-1.68,-1.92) and (-3.16,-1.18)
      .. (-3.10,0.12)
    -- cycle;

  \node[text=OmegaBlue!72!black]
    at (0,-1.43) {$E_c\cap\Pi$};

  \coordinate (qm) at (-1.42,0.72);
  \coordinate (qs) at (0,0.72);
  \coordinate (qp) at (1.42,0.72);

  \draw[midpoint line] (qm) -- (qp);

  \node[point in]  at (qm) {};
  \node[point out] at (qs) {};
  \node[point in]  at (qp) {};

  \node[above left=1pt]  at (qm) {$q_-$};
  \node[above=2pt]       at (qs) {$q_*$};
  \node[above right=1pt] at (qp) {$q_+$};

  \node[
    anchor=west,
    fill=white,
    fill opacity=0.84,
    text opacity=1,
    inner sep=1pt
  ] (level) at (0.55,0.06)
    {$u_\varepsilon=c$};

  \draw[label arrow]
    (level.south west) -- (0.22,-0.34);

  \node[align=center,font=\footnotesize]
    at (0,-2.86)
    {$q_*=(q_-+q_+)/2$,\qquad
      $u_\varepsilon(q_\pm)<c<u_\varepsilon(q_*)$};

\end{tikzpicture}

\caption{A planar section exhibiting the midpoint defect.
The endpoints $q_-$ and $q_+$ belong to the sublevel set
$E_c=\{u_\varepsilon<c\}$, whereas their midpoint $q_*$ does not.
Thus $E_c$ is not convex. Here $c\in I_\varepsilon$ is chosen to be
a regular value.}
\label{fig:3d-midpoint-defect}
\end{figure}


\end{document}